\newtheorem{theorem}{Theorem}
\theoremstyle{plain}
\newtheorem{corollary}{Corollary}
\newtheorem{definition}{Definition}
\newtheorem{lemma}{Lemma}
\newtheorem{remark}{Remark}
\numberwithin{equation}{section}
\begin{document}
\title[New Inequalities for $h-$convex Functions]{On New Inequalities for $%
h- $convex Functions via Riemann-Liouville Fractional Integration}
\author{Mevl\"{u}t TUN\c{C}}
\address{University of Kilis 7 Aral\i k, Faculty of Science and Arts,
Department of Mathematics, 79000, Kilis, Turkey}
\email{mevluttunc@kilis.edu.tr}
\subjclass[2000]{ 26D15, 41A51, 26D10}
\keywords{Riemann-Liouville fractional integral, $h-$convex function,
Hadamard's inequality.}

\begin{abstract}
In this paper, some new inequalities of the Hermite-Hadamard type for $h-$%
convex functions via Riemann-Liouville fractional integral are given.
\end{abstract}

\maketitle

\section{INTRODUCTION}

Let $f:I\subseteq 
\mathbb{R}
\rightarrow 
\mathbb{R}
$ be a convex function and let $a,b\in I,$ with $a<b.$ The following
inequality;%
\begin{equation}
\ \ \ f\left( \frac{a+b}{2}\right) \leq \frac{1}{b-a}\int_{a}^{b}f(x)dx\leq 
\frac{f(a)+f(b)}{2}  \label{a}
\end{equation}%
is known in the literature as Hadamard's inequality. Both inequalities hold
in the reversed direction if $f$ is concave.

In \cite{varr}, Varo\v{s}anec introduced the following class of functions.

\begin{definition}
Let $h:J\subset 
\mathbb{R}
\rightarrow 
\mathbb{R}
$ be a positive function. We say that $f:I\subset 
\mathbb{R}
\rightarrow 
\mathbb{R}
$ is $h-$convex function or that $f$ belongs to the class $SX(h,I)$, if $f$
is nonnegative and for all $x,y\in I$ and $\lambda \in (0,1)$ we have%
\begin{equation}
f(\lambda x+(1-\lambda )y)\leq h(\lambda )f(x)+h(1-\lambda )f(y).  \label{9}
\end{equation}
\end{definition}

If the inequality in (\ref{9}) is reversed, then $f$ is said to be $h-$%
concave, i.e., $f\in SV(h,I).$

Obviously, if $h\left( \lambda \right) =\lambda $, then all nonnegative
convex functions belong to $SX\left( h,I\right) $\ and all nonnegative
concave functions belong to $SV(h,I);$ if $h(\lambda )=\frac{1}{\lambda },$
then $SX(h,I)=Q(I);$ if $h(\lambda )=1,$ then $SX(h,I)\supseteq P(I)$ and if 
$h(\lambda )=\lambda ^{s},$ where $s\in \left( 0,1\right) ,$ then $%
SX(h,I)\supseteq K_{s}^{2}.$ For some recent results for $h-$convex
functions we refer to the interested reader to the papers \cite{zeki}-\cite%
{BU}.

\begin{definition}
\lbrack See \cite{HA}] A function $h:J\rightarrow 
\mathbb{R}
$ is said to be a superadditive function if%
\begin{equation}
h(x+y)\geq h(x)+h(y)  \label{1.7}
\end{equation}%
for all $x,y\in J$.
\end{definition}

In \cite{zeki}, Sar\i kaya et al. proved the following Hadamard type
inequalities for $h-$convex functions.

\begin{theorem}
Let $f\in SX(h,I),$ $a,b\in I$ with $a<b$ and $f\in L_{1}[a,b].$ Then%
\begin{equation}
\frac{1}{2h\left( \frac{1}{2}\right) }f\left( \frac{a+b}{2}\right) \leq 
\frac{1}{b-a}\int_{a}^{b}f(x)dx\leq \lbrack f(a)+f(b)]\int_{0}^{1}h(\alpha
)d\alpha .  \label{10}
\end{equation}
\end{theorem}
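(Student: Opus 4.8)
The plan is to prove the two inequalities in (\ref{10}) separately, in each case reducing the integral to the interval $[0,1]$ by the substitution $x=\lambda a+(1-\lambda )b$, so that $dx=(b-a)\,d\lambda$ (up to sign) and $\frac{1}{b-a}\int_{a}^{b}f(x)\,dx=\int_{0}^{1}f(\lambda a+(1-\lambda )b)\,d\lambda$.

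For the right-hand inequality, I would apply the defining $h$-convexity inequality (\ref{9}) pointwise: for each $\lambda \in (0,1)$,
\begin{equation*}
f(\lambda a+(1-\lambda )b)\leq h(\lambda )f(a)+h(1-\lambda )f(b).
\end{equation*}
Integrating both sides over $\lambda \in [0,1]$ and using the fact that $\int_{0}^{1}h(\lambda )\,d\lambda =\int_{0}^{1}h(1-\lambda )\,d\lambda$ (by the reflection $\lambda \mapsto 1-\lambda$) collapses the two $h$-integrals into a single factor $\int_{0}^{1}h(\alpha )\,d\alpha$, giving $\int_{0}^{1}f(\lambda a+(1-\lambda )b)\,d\lambda \leq [f(a)+f(b)]\int_{0}^{1}h(\alpha )\,d\alpha$, which is exactly the desired bound.

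For the left-hand inequality I would start from the midpoint, writing $\frac{a+b}{2}=\frac{1}{2}\bigl(\lambda a+(1-\lambda )b\bigr)+\frac{1}{2}\bigl((1-\lambda )a+\lambda b\bigr)$ and applying (\ref{9}) with weight $\frac12$ to obtain
\begin{equation*}
f\!\left( \frac{a+b}{2}\right) \leq h\!\left( \tfrac{1}{2}\right)\Bigl[ f(\lambda a+(1-\lambda )b)+f((1-\lambda )a+\lambda b)\Bigr].
\end{equation*}
Integrating over $\lambda \in [0,1]$ and noting that both integrals on the right equal $\int_{0}^{1}f(\lambda a+(1-\lambda )b)\,d\lambda$ (again by $\lambda \mapsto 1-\lambda$) yields $f\!\left( \frac{a+b}{2}\right) \leq 2h(\tfrac{1}{2})\int_{0}^{1}f(\lambda a+(1-\lambda )b)\,d\lambda$; dividing by $2h(\tfrac12)$ gives the left inequality after re-expressing the integral in terms of $x$.

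The main obstacle, and the only genuinely delicate point, is the symmetrization step: one must justify that the change of variable $\lambda\mapsto 1-\lambda$ legitimately identifies the two integrals, which relies on $f\in L_{1}[a,b]$ so that all the integrals exist, and on $h$ being defined and integrable on $[0,1]$ (so that $\int_{0}^{1}h(\alpha)\,d\alpha$ is finite and the factor $h(\tfrac12)$ is meaningful and positive). I expect everything else to be routine, the argument being a direct fractional-free template that the later Riemann-Liouville results presumably generalize.
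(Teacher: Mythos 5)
Your proof is correct, and it is essentially the standard argument: the paper itself states this theorem without proof (it is quoted from Sar\i kaya et al.\ \cite{zeki}), but the right-hand half of your argument is precisely the technique the paper uses for its own Theorem \ref{metu} specialized to $\alpha =1$ (apply the $h$-convexity inequality pointwise in $\lambda $ and integrate over $[0,1]$, symmetrizing via $\lambda \mapsto 1-\lambda $), and your midpoint decomposition $\frac{a+b}{2}=\frac{1}{2}\left( \lambda a+(1-\lambda )b\right) +\frac{1}{2}\left( (1-\lambda )a+\lambda b\right) $ for the left-hand inequality is the standard one from that reference. Nothing is missing: the only hypotheses you invoke beyond the explicit statement, namely that $h$ is positive (so $h\left( \frac{1}{2}\right) >0$ and division is legitimate) and that $\int_{0}^{1}h(\alpha )d\alpha $ is finite, are exactly the implicit assumptions of the original formulation.
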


In \cite{zeki2}, Sar\i kaya et al. proved the following Hadamard type
inequalities for fractional integrals as follows.

\begin{theorem}
\label{diz} Let $f:[a,b]\rightarrow 
\mathbb{R}
$ be positive function with $0\leq a<b$ and $f\in L_{1}[a,b].$ If $f$ is
convex function on $[a,b]$, then the following inequalities for fractional
integrals hold:%
\begin{equation}
f\left( \frac{a+b}{2}\right) \leq \frac{\Gamma (\alpha +1)}{2(b-a)^{\alpha }}%
\left[ J_{a^{+}}^{\alpha }(b)+J_{b^{-}}^{\alpha }(a)\right] \leq \frac{%
f(a)+f(b)}{2}  \label{16}
\end{equation}%
with $\alpha >0.$
\end{theorem}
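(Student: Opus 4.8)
The plan is to mimic the classical Hermite--Hadamard argument, but with the uniform weight $\frac{1}{b-a}$ replaced by the kernel $t^{\alpha-1}$ that generates the Riemann--Liouville operators. Recall that for $\alpha>0$ the left- and right-sided fractional integrals are
\[
J_{a^{+}}^{\alpha}f(x)=\frac{1}{\Gamma(\alpha)}\int_{a}^{x}(x-t)^{\alpha-1}f(t)\,dt,\qquad J_{b^{-}}^{\alpha}f(x)=\frac{1}{\Gamma(\alpha)}\int_{x}^{b}(t-x)^{\alpha-1}f(t)\,dt .
\]
The engine of both bounds is the observation that, after multiplying a pointwise convexity estimate by $t^{\alpha-1}$ and integrating over $t\in[0,1]$, the two resulting integrals turn into $J_{a^{+}}^{\alpha}f(b)$ and $J_{b^{-}}^{\alpha}f(a)$, up to the common constant $\frac{\Gamma(\alpha)}{(b-a)^{\alpha}}$.

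For the left-hand inequality I would start from the midpoint estimate applied to the symmetric pair $ta+(1-t)b$ and $(1-t)a+tb$, whose average is $\frac{a+b}{2}$; convexity then gives
\[
2f\!\left(\frac{a+b}{2}\right)\leq f(ta+(1-t)b)+f((1-t)a+tb).
\]
Multiplying by $t^{\alpha-1}$ and integrating over $[0,1]$ produces $\frac{2}{\alpha}f\!\left(\frac{a+b}{2}\right)$ on the left, since $\int_{0}^{1}t^{\alpha-1}\,dt=\frac{1}{\alpha}$. On the right I would carry out the two linear substitutions $u=ta+(1-t)b$ and $u=(1-t)a+tb$: the first (orientation-reversing) identifies $\int_{0}^{1}t^{\alpha-1}f(ta+(1-t)b)\,dt$ with $\frac{\Gamma(\alpha)}{(b-a)^{\alpha}}J_{a^{+}}^{\alpha}f(b)$, and the second identifies $\int_{0}^{1}t^{\alpha-1}f((1-t)a+tb)\,dt$ with $\frac{\Gamma(\alpha)}{(b-a)^{\alpha}}J_{b^{-}}^{\alpha}f(a)$. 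Multiplying through by $\frac{\alpha}{2}$ and using $\alpha\Gamma(\alpha)=\Gamma(\alpha+1)$ yields the claimed lower bound.

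For the upper bound I would instead invoke convexity directly at each endpoint decomposition, $f(ta+(1-t)b)\leq tf(a)+(1-t)f(b)$ and $f((1-t)a+tb)\leq(1-t)f(a)+tf(b)$, whose sum telescopes to the constant $f(a)+f(b)$. The very same multiplication by $t^{\alpha-1}$, integration, and pair of substitutions reproduces $\frac{\Gamma(\alpha+1)}{2(b-a)^{\alpha}}\bigl[J_{a^{+}}^{\alpha}f(b)+J_{b^{-}}^{\alpha}f(a)\bigr]$ as the middle term, while the right-hand side collapses to $\frac{f(a)+f(b)}{\alpha}\cdot\frac{\alpha}{2}=\frac{f(a)+f(b)}{2}$. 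The only point requiring care is the bookkeeping in the two substitutions: one must track the orientation reversal (the limits swap under $u=ta+(1-t)b$) and match correctly which substitution delivers $J_{a^{+}}$ and which delivers $J_{b^{-}}$, so that the two pieces assemble into exactly the symmetric combination appearing in the statement. Everything else is routine; I note in passing that convexity of $f$ is all the argument actually uses, with positivity and $0\leq a$ playing no essential role.
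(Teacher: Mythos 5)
Your proposal is correct, and it is essentially the standard argument: the paper itself quotes this theorem from Sar\i kaya et al.\ without reproducing a proof, but the device you use --- adding the two convexity estimates at the symmetric points $ta+(1-t)b$ and $(1-t)a+tb$, multiplying by $t^{\alpha-1}$, integrating over $[0,1]$, and converting the two integrals into $J_{a^{+}}^{\alpha}f(b)$ and $J_{b^{-}}^{\alpha}f(a)$ via the substitutions $u=ta+(1-t)b$ and $u=(1-t)a+tb$ --- is exactly the technique the paper deploys in its own Theorem \ref{metu}, and your bookkeeping of the orientation reversal and of the factor $\alpha\Gamma(\alpha)=\Gamma(\alpha+1)$ is accurate. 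Your closing remark is also right: only convexity is used, and the hypotheses of positivity and $0\leq a$ are inessential to the argument.
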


Now we give some necessary definitions and mathematical preliminaries of
fractional calculus theory which are used throughout this paper.

\begin{definition}
Let $f\in L_{1}[a,b].$ The Riemann-Liouville integrals $J_{a^{+}}^{\alpha }f$
and $J_{b^{-}}^{\alpha }f$ of order $\alpha >0$ with $a\geq 0$ are defined by%
\begin{equation*}
J_{a^{+}}^{\alpha }f(x)=\frac{1}{\Gamma (\alpha )}\underset{a}{\overset{x}{%
\int }}\left( x-t\right) ^{\alpha -1}f(t)dt,\text{ \ }x>a
\end{equation*}%
and%
\begin{equation*}
J_{b^{-}}^{\alpha }f(x)=\frac{1}{\Gamma (\alpha )}\underset{x}{\overset{b}{%
\int }}\left( t-x\right) ^{\alpha -1}f(t)dt,\text{ \ }x<b
\end{equation*}%
respectively where $\Gamma (\alpha )=\underset{0}{\overset{\infty }{\int }}%
e^{-u}u^{\alpha -1}du.$ Here is $J_{a^{+}}^{0}f(x)=J_{b^{-}}^{0}f(x)=f(x).$
\end{definition}

In the case of $\alpha =1$, the fractional integral reduces to the classical
integral.

For some recent results connected with \ fractional integral inequalities
see \cite{anastas}-\cite{dahtab} and \cite{zeki2}.

In \cite{zeki2}, Sarikaya \textit{et al.} proved a variant of the identity
is established by Dragomir and Agarwal in \cite[Lemma 2.1]{DA} for
fractional integrals as the following.

\begin{lemma}
\label{l1} Let $f:[a,b]\rightarrow 
\mathbb{R}
$ be a differentiable mapping on $(a,b)$ with $a<b.$ If $f^{\prime }\in
L[a,b],$ then the following equality for fractional integrals holds:%
\begin{eqnarray*}
&&\frac{f(a)+f(b)}{2}-\frac{\Gamma (\alpha +1)}{2(b-a)^{\alpha }}\left[
J_{a^{+}}^{\alpha }f(b)+J_{b^{-}}^{\alpha }f(a)\right] \\
&=&\frac{b-a}{2}\int_{0}^{1}\left[ \left( 1-t\right) ^{\alpha }-t^{\alpha }%
\right] f^{\prime }(ta+(1-t)b)dt.
\end{eqnarray*}
\end{lemma}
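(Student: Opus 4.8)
The plan is to verify the claimed identity by starting from the right-hand side and reducing it to the left-hand side through integration by parts and a change of variables. Let me denote the integral on the right as $I=\int_{0}^{1}\left[\left(1-t\right)^{\alpha}-t^{\alpha}\right]f^{\prime}(ta+(1-t)b)\,dt$, and split it as $I=I_{1}-I_{2}$ with $I_{1}=\int_{0}^{1}(1-t)^{\alpha}f^{\prime}(ta+(1-t)b)\,dt$ and $I_{2}=\int_{0}^{1}t^{\alpha}f^{\prime}(ta+(1-t)b)\,dt$.

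First I would treat $I_{1}$ by integrating by parts, taking $u=(1-t)^{\alpha}$ and $dv=f^{\prime}(ta+(1-t)b)\,dt$, so that $v=\frac{1}{a-b}f(ta+(1-t)b)$ since $\frac{d}{dt}f(ta+(1-t)b)=(a-b)f^{\prime}(ta+(1-t)b)$. The boundary term evaluates to $-\frac{f(b)}{a-b}=\frac{f(b)}{b-a}$ (the upper endpoint contributes zero because $(1-t)^{\alpha}$ vanishes at $t=1$), and the remaining integral is $\frac{\alpha}{a-b}\int_{0}^{1}(1-t)^{\alpha-1}f(ta+(1-t)b)\,dt$. Next I would substitute $x=ta+(1-t)b$, so that $t=\frac{b-x}{b-a}$, $1-t=\frac{x-a}{b-a}$, and $dt=\frac{-dx}{b-a}$; under this substitution the integral $\int_{0}^{1}(1-t)^{\alpha-1}f(ta+(1-t)b)\,dt$ becomes $\frac{1}{(b-a)^{\alpha}}\int_{a}^{b}(x-a)^{\alpha-1}f(x)\,dx=\frac{\Gamma(\alpha)}{(b-a)^{\alpha}}J_{a^{+}}^{\alpha}f(b)$ by the definition of the Riemann-Liouville integral. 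Collecting the constants, $I_{1}$ contributes both the term $\frac{f(b)}{b-a}$ and a term involving $J_{a^{+}}^{\alpha}f(b)$.

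I would then handle $I_{2}$ in the entirely symmetric fashion, now taking $u=t^{\alpha}$ in the integration by parts; here the boundary term comes from the upper endpoint and yields $\frac{f(a)}{b-a}$, while the substitution $x=ta+(1-t)b$ produces $J_{b^{-}}^{\alpha}f(a)=\frac{1}{\Gamma(\alpha)}\int_{a}^{b}(x-a)^{\cdots}$, more precisely the factor $t^{\alpha-1}$ transforms into $(b-x)^{\alpha-1}$ and generates $\frac{\Gamma(\alpha)}{(b-a)^{\alpha}}J_{b^{-}}^{\alpha}f(a)$. Forming $I=I_{1}-I_{2}$, the boundary terms combine into $\frac{f(a)+f(b)}{b-a}$ and the fractional-integral terms combine, after multiplying through by $\frac{b-a}{2}$, into exactly $\frac{f(a)+f(b)}{2}-\frac{\Gamma(\alpha+1)}{2(b-a)^{\alpha}}\left[J_{a^{+}}^{\alpha}f(b)+J_{b^{-}}^{\alpha}f(a)\right]$, using $\alpha\Gamma(\alpha)=\Gamma(\alpha+1)$.

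The main obstacle I anticipate is purely bookkeeping: keeping the signs straight across the two integration-by-parts steps and ensuring the orientation of the substitution $dt=\frac{-dx}{b-a}$ is tracked correctly, since the reversal of limits interacts with the sign coming from $v=\frac{1}{a-b}f(ta+(1-t)b)$. A secondary technical point is justifying the integration by parts when $\alpha<1$, where $(1-t)^{\alpha-1}$ and $t^{\alpha-1}$ are singular at an endpoint; the integrals nonetheless converge since $\alpha>0$, and the boundary terms remain finite because the powers $(1-t)^{\alpha}$ and $t^{\alpha}$ themselves vanish at the relevant endpoints, so the manipulation is legitimate once one notes that $f$ is differentiable (hence bounded) on the interior and integrable on $[a,b]$.
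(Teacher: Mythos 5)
Your strategy is the standard one and it does establish the identity; note that the paper itself gives no proof of Lemma \ref{l1} — it is quoted from the reference of Sar\i kaya et al.\ — and your integration-by-parts-plus-substitution argument is essentially the proof given there. One bookkeeping slip worth flagging: under the substitution $x=ta+(1-t)b$ the kernel $(1-t)^{\alpha-1}=\bigl(\tfrac{x-a}{b-a}\bigr)^{\alpha-1}$ produces $\int_a^b (x-a)^{\alpha-1}f(x)\,dx=\Gamma(\alpha)\,J_{b^-}^{\alpha}f(a)$, not $J_{a^+}^{\alpha}f(b)$ as you wrote, and symmetrically $t^{\alpha-1}\mapsto (b-x)^{\alpha-1}$ yields $J_{a^+}^{\alpha}f(b)$; you have the two labels swapped. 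Since both terms enter the final formula only through the symmetric sum $J_{a^+}^{\alpha}f(b)+J_{b^-}^{\alpha}f(a)$ with the same coefficient, the swap is harmless and your conclusion, including the constant $\Gamma(\alpha+1)=\alpha\Gamma(\alpha)$ and the overall factor $\tfrac{b-a}{2}$, is correct. Your remark on the legitimacy of the integration by parts for $0<\alpha<1$ is also apt, though one should say that the hypothesis $f'\in L[a,b]$ (rather than differentiability alone) is what guarantees $f$ is absolutely continuous so that the boundary evaluations are valid.
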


\bigskip The aim of this paper is to establish Hadamard type inequalities
for $h-$convex functions via Riemann-Liouville fractional integral.

\section{MAIN RESULTS}

\begin{theorem}
\label{metu} Let $f\in SX(h,I),$ $a,b\in I$ with $a<b$ and $f\in L_{1}[a,b].$
Then one has inequality for $h-$convex functions via fractional integrals%
\begin{eqnarray}
&&\frac{\Gamma (\alpha )}{(b-a)^{\alpha }}\left[ J_{a^{+}}^{\alpha
}(b)+J_{b^{-}}^{\alpha }(a)\right]  \label{11} \\
&\leq &\left[ f(a)+f(b)\right] \int_{0}^{1}t^{\alpha -1}\left[ h(t)+h(1-t)%
\right] dt  \notag \\
&\leq &\frac{2\left[ f(a)+f(b)\right] }{\left( \alpha p-p+1\right) ^{\frac{1%
}{p}}}\left( \int_{0}^{1}\left( h\left( t\right) \right) ^{q}dt\right) ^{%
\frac{1}{q}}  \notag
\end{eqnarray}%
where $p^{-1}+q^{-1}=1.$
\end{theorem}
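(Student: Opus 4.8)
The plan is to collapse the fractional-integral expression on the left-hand side into a single integral over $[0,1]$ against a symmetric kernel, then insert the $h$-convexity bound \eqref{9}, and finally control the resulting weighted integral of $h$ by Hölder's inequality. First I would unfold the definitions $J_{a^{+}}^{\alpha }f(b)=\frac{1}{\Gamma (\alpha )}\int_{a}^{b}(b-x)^{\alpha -1}f(x)\,dx$ and $J_{b^{-}}^{\alpha }f(a)=\frac{1}{\Gamma (\alpha )}\int_{a}^{b}(x-a)^{\alpha -1}f(x)\,dx$ and perform the linear substitution $x=ta+(1-t)b$. Under this change of variable one has $b-x=t(b-a)$, $x-a=(1-t)(b-a)$ and $dx=-(b-a)\,dt$, so the factor $(b-a)^{\alpha }$ cancels against the prefactor $\tfrac{\Gamma (\alpha )}{(b-a)^{\alpha }}$ and the two fractional integrals combine into
\begin{equation*}
\frac{\Gamma (\alpha )}{(b-a)^{\alpha }}\left[ J_{a^{+}}^{\alpha }f(b)+J_{b^{-}}^{\alpha }f(a)\right] =\int_{0}^{1}\left[ t^{\alpha -1}+(1-t)^{\alpha -1}\right] f(ta+(1-t)b)\,dt.
\end{equation*}

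Next I would apply the defining bound $f(ta+(1-t)b)\leq h(t)f(a)+h(1-t)f(b)$ inside the integral and expand. Making the substitution $t\mapsto 1-t$ in the terms carrying $h(1-t)$, the coefficient of $f(a)$ and the coefficient of $f(b)$ turn out to be equal, each reducing to $\int_{0}^{1}t^{\alpha -1}\left[ h(t)+h(1-t)\right] \,dt$ (using $\int_{0}^{1}(1-t)^{\alpha -1}h(t)\,dt=\int_{0}^{1}t^{\alpha -1}h(1-t)\,dt$). This produces the first inequality in \eqref{11}. The only point requiring attention here is the symmetry bookkeeping, so that both weights collapse to the single symmetric kernel $t^{\alpha -1}\left[ h(t)+h(1-t)\right] $.

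For the second inequality I would split $\int_{0}^{1}t^{\alpha -1}\left[ h(t)+h(1-t)\right] \,dt$ into two pieces and bound each by Hölder's inequality with exponents $p,q$ satisfying $p^{-1}+q^{-1}=1$: for the first piece,
\begin{equation*}
\int_{0}^{1}t^{\alpha -1}h(t)\,dt\leq \left( \int_{0}^{1}t^{(\alpha -1)p}\,dt\right) ^{1/p}\left( \int_{0}^{1}(h(t))^{q}\,dt\right) ^{1/q}=\frac{1}{(\alpha p-p+1)^{1/p}}\left( \int_{0}^{1}(h(t))^{q}\,dt\right) ^{1/q},
\end{equation*}
where the power integral equals $\tfrac{1}{(\alpha -1)p+1}$ under the convergence condition $\alpha p-p+1>0$. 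Treating $\int_{0}^{1}t^{\alpha -1}h(1-t)\,dt$ identically and noting $\int_{0}^{1}(h(1-t))^{q}\,dt=\int_{0}^{1}(h(t))^{q}\,dt$, the two Hölder bounds coincide; summing them yields the factor $2$, and since $f$ is nonnegative we may multiply through by $f(a)+f(b)\geq 0$ without reversing the inequality, which completes \eqref{11}.

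I expect no genuine obstacle beyond carefully executing the substitution and noting the convergence condition $\alpha p-p+1>0$ needed for the power integral; the only conceptual step is the symmetrization in the middle, everything else being a routine application of $h$-convexity and Hölder's inequality.
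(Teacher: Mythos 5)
Your proposal is correct and follows essentially the same route as the paper: both convert the fractional integrals into an integral of $f(ta+(1-t)b)$ against the weight $t^{\alpha-1}$ over $[0,1]$ (you symmetrize the kernel where the paper symmetrizes the argument of $f$, which is the same computation), apply $h$-convexity to reach the middle term, and then use H\"{o}lder's inequality; your only deviation is bounding $\int_{0}^{1}t^{\alpha -1}h(t)\,dt$ and $\int_{0}^{1}t^{\alpha -1}h(1-t)\,dt$ separately, whereas the paper applies H\"{o}lder to the sum and then Minkowski's inequality, both yielding the identical constant $2(\alpha p-p+1)^{-1/p}$. Your explicit remark that the convergence condition $\alpha p-p+1>0$ is needed for $\int_{0}^{1}t^{(\alpha -1)p}\,dt$ to be finite is a point the paper leaves implicit.
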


\begin{proof}
Since $f\in SX(h,I)$, we have%
\begin{equation*}
f(tx+(1-t)y)\leq h(t)f(x)+h(1-t)f(y)
\end{equation*}%
and%
\begin{equation*}
f((1-t)x+ty)\leq h(1-t)f(x)+h(t)f(y).
\end{equation*}%
By adding these inequalities we get%
\begin{equation}
f(tx+(1-t)y)+f((1-t)x+ty)\leq \left[ h(t)+h(1-t)\right] \left[ f(x)+f(y)%
\right] .  \label{12}
\end{equation}

By using (\ref{12}) with $x=a$ and $y=b$ we have%
\begin{equation}
f(ta+(1-t)b)+f((1-t)a+tb)\leq \left[ h(t)+h(1-t)\right] \left[ f(a)+f(b)%
\right] .  \label{13}
\end{equation}%
Then multiplying both sides of (\ref{13}) by $t^{\alpha -1}$ and integrating
the resulting inequality with respect to $t$ over $[0,1]$, we get%
\begin{eqnarray*}
&&\int_{0}^{1}t^{\alpha -1}\left[ f(ta+(1-t)b)+f((1-t)a+tb)\right] dt \\
&\leq &\int_{0}^{1}t^{\alpha -1}\left[ h(t)+h(1-t)\right] \left[ f(a)+f(b)%
\right] dt,
\end{eqnarray*}%
\begin{eqnarray}
&&\frac{\Gamma (\alpha )}{(b-a)^{\alpha }}\left[ J_{a^{+}}^{\alpha
}(b)+J_{b^{-}}^{\alpha }(a)\right]  \label{14} \\
&\leq &\left[ f(a)+f(b)\right] \int_{0}^{1}t^{\alpha -1}\left[ h(t)+h(1-t)%
\right] dt  \notag
\end{eqnarray}%
and thus the first inequality is proved.

To obtain the second inequality in (\ref{11}), by using H\"{o}lder
inequality for the right hand side of (\ref{14}), we obtain 
\begin{eqnarray*}
&&\int_{0}^{1}t^{\alpha -1}\left[ h\left( t\right) +h\left( 1-t\right) %
\right] dt \\
&\leq &\left( \int_{0}^{1}\left( t^{\alpha -1}\right) ^{p}dt\right) ^{\frac{1%
}{p}}\left( \int_{0}^{1}\left( h\left( t\right) +h\left( 1-t\right) \right)
^{q}dt\right) ^{\frac{1}{q}} \\
&=&\left( \left. \frac{t^{\alpha p-p+1}}{^{\alpha p-p+1}}\right\vert
_{0}^{1}\right) ^{^{\frac{1}{p}}}\left( \int_{0}^{1}\left( h\left( t\right)
+h\left( 1-t\right) \right) ^{q}dt\right) ^{\frac{1}{q}} \\
&=&\left( \frac{1}{^{\alpha p-p+1}}\right) ^{^{\frac{1}{p}}}\left(
\int_{0}^{1}\left( h\left( t\right) +h\left( 1-t\right) \right)
^{q}dt\right) ^{\frac{1}{q}}
\end{eqnarray*}%
Then using Minkowski inequality%
\begin{eqnarray*}
&&\left( \frac{1}{^{\alpha p-p+1}}\right) ^{^{\frac{1}{p}}}\left(
\int_{0}^{1}\left( h\left( t\right) +h\left( 1-t\right) \right)
^{q}dt\right) ^{\frac{1}{q}} \\
&\leq &\left( \frac{1}{^{\alpha p-p+1}}\right) ^{^{\frac{1}{p}}}\left[
\left( \int_{0}^{1}\left( h\left( t\right) \right) ^{q}dt\right) ^{\frac{1}{q%
}}+\left( \int_{0}^{1}\left( h\left( 1-t\right) \right) ^{q}dt\right) ^{%
\frac{1}{q}}\right] \\
&=&\frac{2}{\left( ^{\alpha p-p+1}\right) ^{^{\frac{1}{p}}}}\left(
\int_{0}^{1}\left( h\left( t\right) \right) ^{q}dt\right) ^{\frac{1}{q}}
\end{eqnarray*}%
where the proof is completed.
\end{proof}

\begin{remark}
\label{r1}If we choose $\alpha =1$ in Theorem 1, we obtain%
\begin{eqnarray*}
\frac{1}{b-a}\int_{a}^{b}f\left( x\right) dx &\leq &\left[ f\left( a\right)
+f\left( b\right) \right] \int_{0}^{1}h(t)dt \\
&\leq &\left[ f\left( a\right) +f\left( b\right) \right] \left(
\int_{0}^{1}\left( h\left( t\right) \right) ^{q}dt\right) ^{\frac{1}{q}}.
\end{eqnarray*}
\end{remark}

\begin{corollary}
(1) If we choose $h\left( \lambda \right) =\lambda $ in Remark \ref{r1}, we
get%
\begin{equation*}
\frac{1}{b-a}\int_{a}^{b}f\left( x\right) dx\leq \frac{f\left( a\right)
+f\left( b\right) }{2}=\frac{f\left( a\right) +f\left( b\right) }{\left(
q+1\right) ^{\frac{1}{q}}}
\end{equation*}%
for ordinary convex functions.

(2) If we choose $h\left( \lambda \right) =1$ in Remark \ref{r1}, we get%
\begin{equation*}
\frac{2}{b-a}\int_{a}^{b}f\left( x\right) dx\leq 2\left( f\left( a\right)
+f\left( b\right) \right)
\end{equation*}%
for $P-$functions. This inequality is refinement of right hand side of (\ref%
{a}) for $P-$functions.

(3) If we choose $h\left( \lambda \right) =$ $\lambda ^{s}$ in Remark \ref%
{r1}, we get%
\begin{equation*}
\frac{1}{b-a}\int_{0}^{1}f\left( x\right) dx\leq \frac{f\left( a\right)
+f\left( b\right) }{s+1}\leq \frac{f\left( a\right) +f\left( b\right) }{%
\left( sq+1\right) ^{\frac{1}{q}}}
\end{equation*}%
for $s-$convex functions in the second sense with $s\in \left( 0,1\right] $%
.\bigskip
\end{corollary}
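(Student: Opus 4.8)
The plan is to derive all three parts directly from Remark \ref{r1}, which is nothing but the specialization $\alpha=1$ of Theorem \ref{metu} and asserts
\begin{equation*}
\frac{1}{b-a}\int_a^b f(x)\,dx \leq \left[f(a)+f(b)\right]\int_0^1 h(t)\,dt \leq \left[f(a)+f(b)\right]\left(\int_0^1 \left(h(t)\right)^q\,dt\right)^{\frac{1}{q}}.
\end{equation*}
Since the substantive work (the $h$-convexity estimate together with the H\"older and Minkowski steps) is already packaged inside that statement, each part of the corollary reduces to selecting the particular weight $h$ that, as recorded in the introduction, identifies the relevant subclass of $SX(h,I)$, and then evaluating the two elementary power integrals $\int_0^1 h(t)\,dt$ and $\left(\int_0^1 (h(t))^q\,dt\right)^{1/q}$.

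Concretely, for part (1) I would take $h(\lambda)=\lambda$, so that $SX(h,I)$ collapses to the nonnegative convex functions; here $\int_0^1 t\,dt=\tfrac12$ produces the classical upper Hadamard bound $\tfrac{f(a)+f(b)}{2}$, while $\left(\int_0^1 t^q\,dt\right)^{1/q}=(q+1)^{-1/q}$ produces the H\"older refinement $\tfrac{f(a)+f(b)}{(q+1)^{1/q}}$. For part (2) I would set $h(\lambda)=1$, which places $P(I)$ inside the class; then both $\int_0^1 1\,dt$ and $\left(\int_0^1 1\,dt\right)^{1/q}$ equal $1$, and multiplying the resulting estimate through by $2$ gives the stated form. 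For part (3) I would choose $h(\lambda)=\lambda^s$ with $s\in(0,1]$, corresponding to $s$-convexity in the second sense; here $\int_0^1 t^s\,dt=(s+1)^{-1}$ and $\left(\int_0^1 t^{sq}\,dt\right)^{1/q}=(sq+1)^{-1/q}$ yield the two successive upper bounds.

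The computations are entirely routine, so there is no genuine analytic obstacle; the only points needing care are bookkeeping ones. First, I must invoke the correspondences from the introduction (namely $h(\lambda)=\lambda$ giving convex functions, $h(\lambda)=1$ giving $P$-functions, and $h(\lambda)=\lambda^s$ giving $K_s^2$) so that the hypothesis $f\in SX(h,I)$ of Remark \ref{r1} is legitimately met for the named subclass. Second, in part (1) the quantities $\tfrac{f(a)+f(b)}{2}$ and $\tfrac{f(a)+f(b)}{(q+1)^{1/q}}$ are two distinct upper estimates for the integral mean (the former sharper than the latter whenever $q>1$) rather than equal numbers, so I would present them as a pair of inequalities, both inherited from the two $\leq$ signs of Remark \ref{r1}, rather than asserting an equality between them. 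With these conventions fixed, substitution of the three weights completes the proof.
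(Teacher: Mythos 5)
Your proposal is correct and matches the paper's (implicit) argument: the corollary carries no separate proof and is obtained exactly as you describe, by substituting $h(\lambda)=\lambda$, $h(\lambda)=1$ and $h(\lambda)=\lambda^{s}$ into Remark \ref{r1} and evaluating $\int_{0}^{1}h(t)\,dt$ and $\left( \int_{0}^{1}\left( h(t)\right) ^{q}dt\right) ^{1/q}$. Your remark that in part (1) the quantities $\frac{f(a)+f(b)}{2}$ and $\frac{f(a)+f(b)}{(q+1)^{1/q}}$ should be linked by $\leq $ rather than by an equality (they coincide only when $q=1$) is a valid correction of a slip in the corollary as stated.
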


\begin{theorem}
\label{metu2} Let $f\in SX(h,I),$ $a,b\in I$ with $a<b,$ $h$ is
superadditive on $I$ and $f\in L_{1}[a,b],$ $h\in L_{1}[0,1].$ Then one has
inequality for $h-$convex functions via fractional integrals%
\begin{equation}
\frac{\Gamma (\alpha )}{(b-a)^{\alpha }}\left[ J_{a^{+}}^{\alpha
}(b)+J_{b^{-}}^{\alpha }(a)\right] \leq \frac{h(1)}{\alpha }\left[ f(a)+f(b)%
\right] .  \label{xy}
\end{equation}
\end{theorem}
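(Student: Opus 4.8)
The plan is to build directly on the first inequality already established in Theorem~\ref{metu}, so that almost all of the work is inherited. Recall that inequality~(\ref{14}) gives, for any $f\in SX(h,I)$,
\begin{equation*}
\frac{\Gamma (\alpha )}{(b-a)^{\alpha }}\left[ J_{a^{+}}^{\alpha }(b)+J_{b^{-}}^{\alpha }(a)\right] \leq \left[ f(a)+f(b)\right] \int_{0}^{1}t^{\alpha -1}\left[ h(t)+h(1-t)\right] dt,
\end{equation*}
so the entire task reduces to bounding the single integral $\int_{0}^{1}t^{\alpha -1}\left[ h(t)+h(1-t)\right] dt$ from above by $h(1)/\alpha$.

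The key observation is that the superadditivity hypothesis is tailor-made for the argument $h(t)+h(1-t)$. Applying the defining inequality~(\ref{1.7}) with $x=t$ and $y=1-t$ yields $h(t)+h(1-t)\leq h(t+(1-t))=h(1)$ for every $t\in\lbrack 0,1]$. Since the weight $t^{\alpha -1}$ is nonnegative on $(0,1]$, I would integrate this pointwise bound to obtain
\begin{equation*}
\int_{0}^{1}t^{\alpha -1}\left[ h(t)+h(1-t)\right] dt\leq h(1)\int_{0}^{1}t^{\alpha -1}dt=\frac{h(1)}{\alpha },
\end{equation*}
where the final power-function integral converges for all $\alpha >0$. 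The integrability assumptions $f\in L_{1}[a,b]$ and $h\in L_{1}[0,1]$, together with the domination of the integrand by the integrable function $h(1)\,t^{\alpha -1}$, ensure that every integral written above is finite and the manipulations are legitimate.

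Substituting this estimate back into~(\ref{14}) immediately produces the claimed inequality~(\ref{xy}). Honestly there is no serious obstacle here: the proof is a one-line application of superadditivity followed by an elementary integral. The only points requiring a moment's care are confirming the \emph{direction} of the superadditivity inequality---it runs the favourable way, $h(t)+h(1-t)\leq h(1)$, precisely because $t+(1-t)=1$---and checking that $t^{\alpha -1}\geq 0$ so that multiplying the pointwise bound by the weight preserves the inequality before integrating. Both hold automatically for $\alpha >0$, so the argument goes through without complication.
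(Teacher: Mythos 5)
Your proposal is correct and is essentially the paper's own argument: the paper likewise applies superadditivity with $x=t$, $y=1-t$ to get the pointwise bound $h(t)+h(1-t)\leq h(1)$, multiplies by $t^{\alpha-1}$, and integrates, merely re-deriving inequality~(\ref{14}) from scratch rather than citing it as you do. Your shortcut of reusing~(\ref{14}) is a harmless streamlining of the same proof.
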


\begin{proof}
Since $f\in SX(h,I)$, we have%
\begin{equation*}
f(tx+(1-t)y)\leq h(t)f(x)+h(1-t)f(y)
\end{equation*}%
and%
\begin{equation*}
f((1-t)x+ty)\leq h(1-t)f(x)+h(t)f(y).
\end{equation*}%
By adding these inequalities we get%
\begin{equation}
f(tx+(1-t)y)+f((1-t)x+ty)\leq \left[ h(t)+h(1-t)\right] \left[ f(x)+f(y)%
\right] .  \label{x}
\end{equation}

By using (\ref{x}) with $x=a$ and $y=b$ and $h$ is superadditive, we get%
\begin{equation}
f(ta+(1-t)b)+f((1-t)a+tb)\leq h(1)\left[ f(a)+f(b)\right] .  \label{y}
\end{equation}%
Then multiplying both sides of (\ref{y}) by $t^{\alpha -1}$ and integrating
the resulting inequality with respect to $t$ over $[0,1]$, we get%
\begin{eqnarray*}
&&\int_{0}^{1}t^{\alpha -1}\left[ f(ta+(1-t)b)+f((1-t)a+tb)\right] dt \\
&\leq &\int_{0}^{1}t^{\alpha -1}h(1)\left[ f(a)+f(b)\right] dt,
\end{eqnarray*}%
\begin{eqnarray*}
&&\frac{\Gamma (\alpha )}{(b-a)^{\alpha }}\left[ J_{a^{+}}^{\alpha
}(b)+J_{b^{-}}^{\alpha }(a)\right] \\
&\leq &h(1)\left[ f(a)+f(b)\right] \int_{0}^{1}t^{\alpha -1}dt.
\end{eqnarray*}%
This completes the proof.
\end{proof}

\begin{remark}
\label{r2}If we choose $\alpha =1$ in Theorem \ref{metu2}, then (\ref{xy})
reduce to special version of right hand side of (\ref{10}).
\end{remark}

\begin{theorem}
Let $h:J\subset 
\mathbb{R}
\rightarrow 
\mathbb{R}
$ and $f:\left[ a,b\right] \rightarrow 
\mathbb{R}
$ be positive functions with $0\leq a<b$ and $h^{q}\in L_{1}\left[ 0,1\right]
,$ $f\in L_{1}\left[ a,b\right] .$ If $\left\vert f^{\prime }\right\vert $
is an $h-$convex mapping on $\left[ a,b\right] $, then the following
inequality for fractional integrals holds,%
\begin{eqnarray}
&&\left\vert \frac{f\left( a\right) +f\left( b\right) }{2}-\frac{\Gamma
\left( \alpha +1\right) }{2\left( b-a\right) ^{\alpha }}\left[
J_{a+}^{\alpha }f\left( b\right) +J_{b-}^{\alpha }f\left( a\right) \right]
\right\vert  \label{mt} \\
&\leq &\frac{\left( b-a\right) \left[ \left\vert f^{\prime }\left( a\right)
\right\vert +\left\vert f^{\prime }\left( b\right) \right\vert \right] }{2}%
\left[ \left( \frac{2^{\alpha p+1}-1}{2^{\alpha p+1}\left( \alpha p+1\right) 
}\right) ^{\frac{1}{p}}-\left( \frac{1}{2^{\alpha p+1}\left( \alpha
p+1\right) }\right) ^{\frac{1}{p}}\right]  \notag \\
&&\times \left[ \left( \int_{0}^{\frac{1}{2}}\left( h\left( t\right) \right)
^{q}dt\right) ^{\frac{1}{q}}+\left( \int_{\frac{1}{2}}^{1}\left( h\left(
t\right) \right) ^{q}dt\right) ^{\frac{1}{q}}\right]  \notag
\end{eqnarray}%
where $\alpha >0$, $p>1$ and $p^{-1}+q^{-1}=1.$
\end{theorem}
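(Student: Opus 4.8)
The plan is to start from the fractional Dragomir--Agarwal identity of Lemma \ref{l1}, pass to absolute values, and then estimate the resulting integral by combining the $h$-convexity of $\left\vert f^{\prime }\right\vert $ with H\"{o}lder's inequality, in the spirit of the proof of Theorem \ref{metu}. First I would apply Lemma \ref{l1} together with the triangle inequality to get
\begin{equation*}
\left\vert \frac{f(a)+f(b)}{2}-\frac{\Gamma (\alpha +1)}{2(b-a)^{\alpha }}\left[ J_{a+}^{\alpha }f(b)+J_{b-}^{\alpha }f(a)\right] \right\vert \leq \frac{b-a}{2}\int_{0}^{1}\left\vert (1-t)^{\alpha }-t^{\alpha }\right\vert \left\vert f^{\prime }(ta+(1-t)b)\right\vert dt.
\end{equation*}
Because $(1-t)^{\alpha }-t^{\alpha }\geq 0$ exactly on $\left[ 0,\tfrac{1}{2}\right] $ and $\leq 0$ on $\left[ \tfrac{1}{2},1\right] $, I would split the integral at $t=\tfrac{1}{2}$, replacing the absolute value of the weight by $(1-t)^{\alpha }-t^{\alpha }$ on the first half and by $t^{\alpha }-(1-t)^{\alpha }$ on the second.

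On each subinterval I would first invoke the $h$-convexity of $\left\vert f^{\prime }\right\vert $, namely $\left\vert f^{\prime }(ta+(1-t)b)\right\vert \leq h(t)\left\vert f^{\prime }(a)\right\vert +h(1-t)\left\vert f^{\prime }(b)\right\vert $, splitting each integral into a term carrying $\left\vert f^{\prime }(a)\right\vert $ and a term carrying $\left\vert f^{\prime }(b)\right\vert $, and then apply H\"{o}lder's inequality with exponents $p,q$ to each of the four resulting pieces. In every piece the weight separates off as $\left( \int \left[ (1-t)^{\alpha }-t^{\alpha }\right] ^{p}dt\right) ^{1/p}$ over the relevant half, while the $h$-factors produce $\left( \int_{0}^{1/2}h(t)^{q}dt\right) ^{1/q}$ and, after the substitution $t\mapsto 1-t$, $\left( \int_{1/2}^{1}h(t)^{q}dt\right) ^{1/q}$. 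The useful bookkeeping point is the symmetry $t\mapsto 1-t$ between the two halves: the piece pairing $\left\vert f^{\prime }(a)\right\vert $ with $\int_{0}^{1/2}h^{q}$ on the first half is matched by one pairing $\left\vert f^{\prime }(a)\right\vert $ with $\int_{1/2}^{1}h^{q}$ on the second half, and symmetrically for $\left\vert f^{\prime }(b)\right\vert $. Adding the two halves therefore collapses the four pieces exactly into $\left[ \left\vert f^{\prime }(a)\right\vert +\left\vert f^{\prime }(b)\right\vert \right] \left[ \left( \int_{0}^{1/2}h^{q}\right) ^{1/q}+\left( \int_{1/2}^{1}h^{q}\right) ^{1/q}\right] $, with no wasteful estimate, and it carries the overall factor $\tfrac{b-a}{2}$ from the identity.

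The main obstacle is the weight estimate, that is, controlling $\left( \int_{0}^{1/2}\left[ (1-t)^{\alpha }-t^{\alpha }\right] ^{p}dt\right) ^{1/p}$ in closed form. The two elementary integrals are $\int_{0}^{1/2}(1-t)^{\alpha p}dt=\frac{2^{\alpha p+1}-1}{2^{\alpha p+1}(\alpha p+1)}$ and $\int_{0}^{1/2}t^{\alpha p}dt=\frac{1}{2^{\alpha p+1}(\alpha p+1)}$, so the upper bound targeted in (\ref{mt}) is exactly the difference of their $p$-th roots. Passing from $\left( \int \left[ (1-t)^{\alpha }-t^{\alpha }\right] ^{p}dt\right) ^{1/p}$ to this difference is the delicate step: it is a Minkowski-type comparison for the difference of the two nonnegative functions $(1-t)^{\alpha }$ and $t^{\alpha }$ on $\left[ 0,\tfrac{1}{2}\right] $, and the elementary triangle inequality in $L^{p}$ applied to $(1-t)^{\alpha }=\left[ (1-t)^{\alpha }-t^{\alpha }\right] +t^{\alpha }$ naturally controls this quantity from below, so securing the required upper bound is where I would have to argue most carefully. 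Once that weight bound is in hand, substituting it into the symmetric sum above and multiplying by $\tfrac{b-a}{2}$ yields (\ref{mt}).
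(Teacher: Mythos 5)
Your set-up coincides with the paper's: Lemma \ref{l1}, the passage to the modulus, the split of $[0,1]$ at $t=\tfrac{1}{2}$ according to the sign of $(1-t)^{\alpha }-t^{\alpha }$, and the $h$-convexity bound $\left\vert f^{\prime }(ta+(1-t)b)\right\vert \leq h(t)\left\vert f^{\prime }(a)\right\vert +h(1-t)\left\vert f^{\prime }(b)\right\vert $ are all exactly as in the paper, and your symmetry bookkeeping for the four resulting pieces is the same. The divergence, and the genuine gap, lies in how you apply H\"{o}lder. You keep the weight $(1-t)^{\alpha }-t^{\alpha }$ intact, which produces the factor $\left( \int_{0}^{1/2}\left[ (1-t)^{\alpha }-t^{\alpha }\right] ^{p}dt\right) ^{1/p}$, and you then need to dominate this by
\begin{equation*}
\left( \int_{0}^{1/2}(1-t)^{\alpha p}dt\right) ^{\frac{1}{p}}-\left(
\int_{0}^{1/2}t^{\alpha p}dt\right) ^{\frac{1}{p}}=\left( \frac{2^{\alpha
p+1}-1}{2^{\alpha p+1}\left( \alpha p+1\right) }\right) ^{\frac{1}{p}
}-\left( \frac{1}{2^{\alpha p+1}\left( \alpha p+1\right) }\right) ^{\frac{1}{
p}}.
\end{equation*}
As you yourself observe, Minkowski only gives $\left\Vert u-v\right\Vert
_{p}\geq \left\Vert u\right\Vert _{p}-\left\Vert v\right\Vert _{p}$, i.e.\ it controls your quantity from \emph{below}; what you need is a reverse Minkowski inequality, and it is false here. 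For $\alpha =1$, $p=2$ one computes $\int_{0}^{1/2}(1-2t)^{2}dt=\tfrac{1}{6}$, so your factor equals $1/\sqrt{6}\approx 0.408$, whereas $\sqrt{7/24}-\sqrt{1/24}\approx 0.336$. Your route therefore produces a bound strictly larger than the right-hand side of (\ref{mt}) and cannot close; the step you flag as ``delicate'' is not merely delicate but unavailable.

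The paper avoids this configuration by first distributing the weight, writing each half as $\int_{0}^{1/2}(1-t)^{\alpha }h(t)dt-\int_{0}^{1/2}t^{\alpha }h(t)dt$ (and the analogous terms on $[\tfrac{1}{2},1]$), and applying H\"{o}lder separately to each of the two integrals; the two constants appearing in (\ref{mt}) are precisely the resulting $p$-th roots. That said, your unease about the difference structure is well founded: the paper's own final step replaces the \emph{subtracted} integral $\int_{0}^{1/2}t^{\alpha }h(t)dt$ by its H\"{o}lder \emph{upper} bound, which is likewise an estimate in the wrong direction when one wants an upper bound on a difference. In any case, as written your proposal does not establish the theorem, because its key weight estimate fails.
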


\begin{proof}
From Lemma \ref{l1} and using the properties of modulus, we have%
\begin{eqnarray*}
&&\left\vert \frac{f\left( a\right) +f\left( b\right) }{2}-\frac{\Gamma
\left( \alpha +1\right) }{2\left( b-a\right) ^{\alpha }}\left[
J_{a+}^{\alpha }f\left( b\right) +J_{b-}^{\alpha }f\left( a\right) \right]
\right\vert \\
&\leq &\frac{b-a}{2}\int_{0}^{1}\left\vert \left( 1-t\right) ^{\alpha
}-t^{\alpha }\right\vert \left\vert f^{\prime }\left( ta+\left( 1-t\right)
b\right) \right\vert dt.
\end{eqnarray*}%
Since $\left\vert f^{\prime }\right\vert $ is $h-$convex on $\left[ a,b%
\right] $, we have%
\begin{eqnarray}
&&\left\vert \frac{f\left( a\right) +f\left( b\right) }{2}-\frac{\Gamma
\left( \alpha +1\right) }{2\left( b-a\right) ^{\alpha }}\left[
J_{a+}^{\alpha }\left( b\right) +J_{b-}^{\alpha }\left( a\right) \right]
\right\vert  \label{m2} \\
&\leq &\frac{b-a}{2}\left\{ \int_{0}^{\frac{1}{2}}\left[ \left( 1-t\right)
^{\alpha }-t^{\alpha }\right] \left[ h\left( t\right) \left\vert f^{\prime
}\left( a\right) \right\vert +h\left( 1-t\right) \left\vert f^{\prime
}\left( b\right) \right\vert \right] dt\right.  \notag \\
&&+\left. \int_{\frac{1}{2}}^{1}\left[ t^{\alpha }-\left( 1-t\right)
^{\alpha }\right] \left[ h\left( t\right) \left\vert f^{\prime }\left(
a\right) \right\vert +h\left( 1-t\right) \left\vert f^{\prime }\left(
b\right) \right\vert \right] dt\right\}  \notag \\
&=&\frac{b-a}{2}\left\{ \left\vert f^{\prime }\left( a\right) \right\vert
\int_{0}^{\frac{1}{2}}\left( 1-t\right) ^{\alpha }h\left( t\right)
dt-\left\vert f^{\prime }\left( a\right) \right\vert \int_{0}^{\frac{1}{2}%
}t^{\alpha }h\left( t\right) dt\right.  \notag \\
&&+\left. \left\vert f^{\prime }\left( b\right) \right\vert \int_{0}^{\frac{1%
}{2}}\left( 1-t\right) ^{\alpha }h\left( 1-t\right) dt-\left\vert f^{\prime
}\left( b\right) \right\vert \int_{0}^{\frac{1}{2}}t^{\alpha }h\left(
1-t\right) dt\right.  \notag \\
&&+\left. \left\vert f^{\prime }\left( a\right) \right\vert \int_{\frac{1}{2}%
}^{1}t^{\alpha }h\left( t\right) dt-\left\vert f^{\prime }\left( a\right)
\right\vert \int_{\frac{1}{2}}^{1}\left( 1-t\right) ^{\alpha }h\left(
t\right) dt\right.  \notag \\
&&+\left. \left\vert f^{\prime }\left( b\right) \right\vert \int_{\frac{1}{2}%
}^{1}t^{\alpha }h\left( 1-t\right) dt-\left\vert f^{\prime }\left( b\right)
\right\vert \int_{\frac{1}{2}}^{1}\left( 1-t\right) ^{\alpha }h\left(
1-t\right) dt\right\} .  \notag
\end{eqnarray}%
In the right hand side of above inequality by using H\"{o}lder inequality
for $p^{-1}+q^{-1}=1$ and $p>1,$ we get%
\begin{equation*}
\int_{0}^{\frac{1}{2}}\left( 1-t\right) ^{\alpha }h\left( t\right) dt=\int_{%
\frac{1}{2}}^{1}t^{\alpha }h\left( 1-t\right) dt\leq \left[ \frac{2^{\alpha
p+1}-1}{2^{\alpha p+1}\left( \alpha p+1\right) }\right] ^{\frac{1}{p}}\left(
\int_{0}^{\frac{1}{2}}\left[ h\left( t\right) \right] ^{q}dt\right) ^{\frac{1%
}{q}},
\end{equation*}%
\begin{equation*}
\int_{0}^{\frac{1}{2}}\left( 1-t\right) ^{\alpha }h\left( 1-t\right)
dt=\int_{\frac{1}{2}}^{1}t^{\alpha }h\left( t\right) dt\leq \left[ \frac{%
2^{\alpha p+1}-1}{2^{\alpha p+1}\left( \alpha p+1\right) }\right] ^{\frac{1}{%
p}}\left( \int_{\frac{1}{2}}^{1}\left[ h\left( t\right) \right]
^{q}dt\right) ^{\frac{1}{q}},
\end{equation*}%
\begin{equation*}
\int_{0}^{\frac{1}{2}}t^{\alpha }h\left( t\right) dt=\int_{\frac{1}{2}%
}^{1}\left( 1-t\right) ^{\alpha }h\left( 1-t\right) dt\leq \left[ \frac{1}{%
2^{\alpha p+1}\left( \alpha p+1\right) }\right] ^{\frac{1}{p}}\left(
\int_{0}^{\frac{1}{2}}\left[ h\left( t\right) \right] ^{q}dt\right) ^{\frac{1%
}{q}}
\end{equation*}%
and%
\begin{equation*}
\int_{0}^{\frac{1}{2}}t^{\alpha }h\left( 1-t\right) dt=\int_{\frac{1}{2}%
}^{1}\left( 1-t\right) ^{\alpha }h\left( t\right) dt\leq \left[ \frac{1}{%
2^{\alpha p+1}\left( \alpha p+1\right) }\right] ^{\frac{1}{p}}\left( \int_{%
\frac{1}{2}}^{1}\left[ h\left( t\right) \right] ^{q}dt\right) ^{\frac{1}{q}}.
\end{equation*}%
Then using the above inequalities in the right hand side of (\ref{m2}), we
get 
\begin{eqnarray*}
&&\left\vert \frac{f\left( a\right) +f\left( b\right) }{2}-\frac{\Gamma
\left( \alpha +1\right) }{2\left( b-a\right) ^{\alpha }}\left[
J_{a+}^{\alpha }\left( b\right) +J_{b-}^{\alpha }\left( a\right) \right]
\right\vert \\
&\leq &\frac{b-a}{2}\left\{ \left\vert f^{\prime }\left( a\right)
\right\vert \left\{ \left( \left[ \frac{2^{\alpha p+1}-1}{2^{\alpha
p+1}\left( \alpha p+1\right) }\right] ^{\frac{1}{p}}-\left[ \frac{1}{%
2^{\alpha p+1}\left( \alpha p+1\right) }\right] ^{\frac{1}{p}}\right) \left(
\int_{0}^{\frac{1}{2}}\left[ h\left( t\right) \right] ^{q}dt\right) ^{\frac{1%
}{q}}\right\} \right. \\
&&\left. +\left( \left[ \frac{2^{\alpha p+1}-1}{2^{\alpha p+1}\left( \alpha
p+1\right) }\right] ^{\frac{1}{p}}-\left[ \frac{1}{2^{\alpha p+1}\left(
\alpha p+1\right) }\right] ^{\frac{1}{p}}\right) \left( \int_{\frac{1}{2}%
}^{1}\left[ h\left( t\right) \right] ^{q}dt\right) ^{\frac{1}{q}}\right\} \\
&&+\left\vert f^{\prime }\left( b\right) \right\vert \left\{ \left( \left[ 
\frac{2^{\alpha p+1}-1}{2^{\alpha p+1}\left( \alpha p+1\right) }\right] ^{%
\frac{1}{p}}-\left[ \frac{1}{2^{\alpha p+1}\left( \alpha p+1\right) }\right]
^{\frac{1}{p}}\right) \left( \int_{\frac{1}{2}}^{1}\left[ h\left( t\right) %
\right] ^{q}dt\right) ^{\frac{1}{q}}\right. \\
&&\left. +\left( \left[ \frac{2^{\alpha p+1}-1}{2^{\alpha p+1}\left( \alpha
p+1\right) }\right] ^{\frac{1}{p}}-\left[ \frac{1}{2^{\alpha p+1}\left(
\alpha p+1\right) }\right] ^{\frac{1}{p}}\right) \left( \int_{0}^{\frac{1}{2}%
}\left[ h\left( t\right) \right] ^{q}dt\right) ^{\frac{1}{q}}\right\} \\
&=&\frac{b-a}{2}\left\{ \left\vert f^{\prime }\left( a\right) \right\vert
\left( \left[ \frac{2^{\alpha p+1}-1}{2^{\alpha p+1}\left( \alpha p+1\right) 
}\right] ^{\frac{1}{p}}-\left[ \frac{1}{2^{\alpha p+1}\left( \alpha
p+1\right) }\right] ^{\frac{1}{p}}\right) \left[ \left( \int_{0}^{\frac{1}{2}%
}\left[ h\left( t\right) \right] ^{q}dt\right) ^{\frac{1}{q}}+\left( \int_{%
\frac{1}{2}}^{1}\left[ h\left( t\right) \right] ^{q}dt\right) ^{\frac{1}{q}}%
\right] \right. \\
&&\left. +\left\vert f^{\prime }\left( b\right) \right\vert \left( \left[ 
\frac{2^{\alpha p+1}-1}{2^{\alpha p+1}\left( \alpha p+1\right) }\right] ^{%
\frac{1}{p}}-\left[ \frac{1}{2^{\alpha p+1}\left( \alpha p+1\right) }\right]
^{\frac{1}{p}}\right) \left[ \left( \int_{0}^{\frac{1}{2}}\left[ h\left(
t\right) \right] ^{q}dt\right) ^{\frac{1}{q}}+\left( \int_{\frac{1}{2}}^{1}%
\left[ h\left( t\right) \right] ^{q}dt\right) ^{\frac{1}{q}}\right] \right\}
\\
&=&\frac{\left( b-a\right) \left[ \left\vert f^{\prime }\left( a\right)
\right\vert +\left\vert f^{\prime }\left( b\right) \right\vert \right] }{2}%
\left[ \left( \frac{2^{\alpha p+1}-1}{2^{\alpha p+1}\left( \alpha p+1\right) 
}\right) ^{\frac{1}{p}}-\left( \frac{1}{2^{\alpha p+1}\left( \alpha
p+1\right) }\right) ^{\frac{1}{p}}\right] \\
&&\times \left[ \left( \int_{0}^{\frac{1}{2}}\left( h\left( t\right) \right)
^{q}dt\right) ^{\frac{1}{q}}+\left( \int_{\frac{1}{2}}^{1}\left( h\left(
t\right) \right) ^{q}dt\right) ^{\frac{1}{q}}\right]
\end{eqnarray*}%
which is the desired result. The proof is completed.
\end{proof}


\begin{thebibliography}{99}
\bibitem{varr} S. Varo\v{s}anec, \textit{On }$h-$convexity, J. Math. Anal.
Appl., 326 (2007), 303-311.

\bibitem{HA} H. Alzer, A superadditive property of Hadamard's gamma
function, Abh. Math. Semin. Univ. Hambg., 79 (2009), 11-23.

\bibitem{zeki} M. Z. Sar\i kaya, A. Sa\u{g}lam and H. Y\i ld\i r\i m, On
some Hadamard-type inequalities for $h$-convex functions. Journal of
Mathematical Inequalities, 2 (3) (2008), 335-341.

\bibitem{SA2} M. Bombardelli and S. Varo\v{s}anec, Properties of $h-$convex
functions related to the Hermite-Hadamard-Fej\'{e}r inequalities, Computers
and Mathematics with Applications, 58 (2009), 1869-1877.

\bibitem{OZ1} M.Z. Sar\i kaya, E. Set and M.E. \"{O}zdemir, On some new
inequalities of Hadamard type involving $h-$convex functions, Acta Math.
Univ. Comenianae, Vol. LXXIX, 2 (2010), pp. 265-272.

\bibitem{BU} P. Burai and A. H\'{a}zy, On approximately $h-$convex
functions, Journal of Convex Analysis, 18 (2) (2011).

\bibitem{DA} S.S. Dragomir and R. P. Agarwal, Two inequalities for
differentiable mappings and applications to special means of real numbers
and trapezoidal formula, Appl. Math. Lett., 11 (5) (1998), 91-95.

\bibitem{anastas} S. Belarbi and Z. Dahmani, \textit{On some new fractional
integral inequalities}, J. Ineq. Pure and Appl. Math., 10(3) (2009), Art. 86.

\bibitem{dahmani} Z. Dahmani, \textit{New inequalities in fractional
integrals}, International Journal of Nonlinear Science, 9(4) (2010), 493-497.

\bibitem{a} M.E. \"{O}zdemir, H. Kavurmac\i\ and M. Avc\i , New inequalities
of Ostrowski type for mappings whose derivatives are $(\alpha ,m)$-convex
via fractional integrals, RGMIA Research Report Collection, 15(2012),
Article 10, 8 pp.

\bibitem{a1} M.E. \"{O}zdemir, H. Kavurmac\i\ and \c{C}. Y\i ld\i z,
Fractional integral inequalities via $s$-convex functions, arXiv:1201.4915v1
[math.CA] 24 Jan 2012.

\bibitem{a2} E. Set, New inequalities of Ostrowski type for mappings whose
derivatives are $s$-convex in the second sense via fractional integrals,
Comput. Math. Appl., In Press, Corrected Proof, 29 December 2011.

\bibitem{zdahm} Z. Dahmani, \textit{On Minkowski and Hermite-Hadamard
integral inequalities via fractional integration}, Ann. Funct. Anal. 1(1)
(2010), 51-58.

\bibitem{zdah} Z. Dahmani and L. Tabharit, S. Taf, \textit{Some fractional
integral inequalities}, Nonl. Sci. Lett. A., 1(2) (2010), 155-160.

\bibitem{dahtab} Z. Dahmani, L. Tabharit and S. Taf, \textit{New
generalizations of Gr\"{u}ss inequality using Riemann-Liouville fractional
integrals}, Bull. Math. Anal. Appl., 2(3) (2010), 93-99.

\bibitem{zeki2} M. Z. Sar\i kaya, E. Set, H. Yaldiz and N. Ba\c{s}ak,
Hermite-Hadamard's inequalities for fractional integrals and related
fractional inequalities, Mathematical and Computer Modelling, In Press.
\end{thebibliography}
\end{document}